\newtheorem{thm}{Theorem}[section]
\newtheorem{cor}[thm]{Corollary}
\newtheorem{lem}[thm]{Lemma}
\newtheorem{prop}[thm]{Proposition}
\theoremstyle{definition}
\numberwithin{equation}{section}
\begin{document}


\baselineskip=17pt


\title{On a Mazur problem from ``Scottish Book'' concerning second partial derivatives}

\author{Volodymyr Mykhaylyuk\\
Department of Applied Mathematics\\
Chernivtsi National University,\\
58-012 Chernivtsi, Ukraine\\
E-mail: vmykhaylyuk@ukr.net
\and
Anatolij Plichko\\
Institute of Mathematics\\
Cracow University of Technology\\
31-155 Cracow, Poland\\
E-mail: aplichko@pk.edu.pl}

\date{}

\maketitle


\renewcommand{\thefootnote}{}

\footnote{2010 \emph{Mathematics Subject Classification}: 26B05; 26B30.}

\footnote{\emph{Key words and phrases}: mixed derivative, differentiability, measurability.}

\renewcommand{\thefootnote}{\arabic{footnote}}
\setcounter{footnote}{0}


\begin{abstract}
We comment on a Mazur problem from ``Scottish Book'' concerning second partial derivatives.
It is proved that, if a function $f(x,y)$ of real variables defined on a rectangle has
continuous derivative with respect to $y$ and for almost all $y$ the function
$\,F_y(x):=f'_y(x,y)$ has finite variation, then almost everywhere on the rectangle there
exists the partial derivative $f''_{yx}$. We construct a separately twice differentiable
function, whose partial derivative $f'_x$ is discontinuous with respect to the second
variable on a set of positive measure. This solves in the negative the Mazur problem.
\end{abstract}

\section{Introduction}
By classical Banach's result, for an arbitrary (Lebesgue) measurable function
$f:\mathbb R\to\mathbb R$ the set $D$ of differentiability points is measurable
and the derivative $f'$ is measurable on $D$. Haslam-Jones \cite{H-D} generalized
this result for functions of several variables. More exactly, he established that
the set $D$ of differentiability points of a measurable function
$f:\mathbb R^n\to\mathbb R$ is measurable and each of its partial derivative
$f'_{x_i}$ is measurable on $D$.

Investigation of the existence and measurability of partial derivatives was
continued in \cite{Ser}, \cite{MM}. In particular, in \cite{Ser} it was proved
that for a measurable function $f(x_1,\dots, x_n)$, defined on a rectangle $P$,
which is monotone with respect to $i$th variable on almost all segments,
parallel to $i$th axe, $f'_{x_i}$ exists almost everywhere (a.e.). In \cite{MM} it
was proved that for a measurable function $f(x_1,\dots, x_n)$ the set
of existence points for the partial derivative $f'_{x_i}$ is measurable, under
a weaker assumption. Moreover, Serrin \cite{Ser} has constructed a
measurable function $f$ on $[0,1]^2$ which is a.e. differentiable on each horizontal segment as a
function of one variable, but for which the set of the existence of partial derivative
with respect to the first variable is non-measurable.

In the well known ``Scottish Book'' \cite{Mauldin} S.~Mazur posed the
following question (VII.1935, Problem 66):

{\it The real function $z=f(x,y)$ of real variables $x,y$ possesses the
1st partial derivatives $f'_x$, $f'_y$ and the pure second partial derivatives
$f''_{xx}$, $f''_{yy}$. Do there exist then almost everywhere the mixed 2nd partial
derivatives $f''_{xy}$, $f''_{yx}$? According to a remark by p. Schauder\footnote{We
do not know, what ``p`` denotes, ''pan''=``mister'' or ``professor''.}, this
theorem is true with the following additional assumptions: The derivatives
$f'_x$, $f'_y$ are absolutely continuous in the sense of Tonelli, and the derivatives
$f''_{xx}$, $f''_{yy}$ are square integrable. An analogous question for
$n$ variables.}

Mazur's problem has some interest for differential equations in partial
derivatives. If, for example, one considers the equation
$f''_{xx}+f''_{yy}=g,$ there appears a natural question on differentiability
properties of its solution (see e.g. \cite{MatiychukEidelman}).
The Mazur problem is a part of a general problem on the relations between various
partial derivatives in PDE and in the theory of function spaces connected to
derivation (see. e.g. \cite{PelczynskiWojciechowski}, \cite{Triebel}). Some results
of these theories are formulated for classes of functions in Sobolev spaces, so it
is not obvious that they bare valid for individual functions.

From the context of Problem 66 one can suppose that Mazur knew (suspected)
that the mixed derivatives need not exist {\it everywhere}. It may be a
surprise, but only in 1958 Mityagin published an example \cite{Mitiagin},
which shows that the existence and continuity of the second pure derivatives
does not imply the existence of mixed derivatives everywhere. More exactly,
he provided a function $f(x,y)$ continuous in a circle with
center at zero for which there are continuous derivatives $f''_{xx}$ and
 $f''_{yy}$, but $f''_{xy}(0,0)$ does not exist.

Bugrov \cite{Bugrov} improves this result by showing that in a square
there exists a harmonic functions $f(x,y)$ (i.e. such that
$f''_{xx}=-f''_{yy}$) whose mixed partial derivatives are unbounded.
The existence of mixed partial derivatives by means of Fourier
series was investigated in the fundamental memoir \cite{Bernstein0}.
Bernstein obtained the following results.

\begin{thm} \label{th:1.1} {\rm \cite[Th.~79]{Bernstein0}}. Let
$f(x,y)$ be a $2\pi$-periodic in both variables function, whose partial
derivatives $f^{(k)}_{x^k}$ and  $f^{(k)}_{y^k}$ are developed into double
trigonometric Fourier series whose sum of the modulus of coefficients
are not greater than $c$. Then there are all mixed derivatives of
$f$ of order $k$, which are developed into double
trigonometric Fourier series whose sums of the modulus of coefficients
are not greater than $2c$.
\end{thm}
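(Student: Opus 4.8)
The plan is to pass to double Fourier series and reduce the statement to one elementary estimate on the coefficients. Write
\[ f(x,y)=\sum_{m,n\in\mathbb Z}a_{mn}\,e^{i(mx+ny)}. \]
Integrating by parts $k$ times in $x$, and $k$ times in $y$, and using $2\pi$-periodicity in each variable, one sees that the Fourier coefficients of $f^{(k)}_{x^k}$ and $f^{(k)}_{y^k}$ are $(im)^{k}a_{mn}$ and $(in)^{k}a_{mn}$ respectively, so that the hypothesis reads exactly
\[ \sum_{m,n}|m|^{k}\,|a_{mn}|\le c\qquad\text{and}\qquad\sum_{m,n}|n|^{k}\,|a_{mn}|\le c. \]
Splitting off the indices with $m\neq0$, those with $m=0\neq n$, and the pair $(0,0)$, one gets $\sum_{m,n}|a_{mn}|<\infty$; in particular $f$ is continuous and equals the sum of its (absolutely convergent) Fourier series.

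The heart of the matter is the trivial inequality: for integers $m,n$ and any $j$ with $0\le j\le k$,
\[ |m|^{j}\,|n|^{k-j}\le\max(|m|,|n|)^{k}\le|m|^{k}+|n|^{k}. \]
Multiplying by $|a_{mn}|$ and summing gives
\[ \sum_{m,n}|m|^{j}\,|n|^{k-j}\,|a_{mn}|\le\sum_{m,n}|m|^{k}|a_{mn}|+\sum_{m,n}|n|^{k}|a_{mn}|\le 2c. \]
Hence, for each $j$, the trigonometric series
\[ S_{j}(x,y):=\sum_{m,n}(im)^{j}(in)^{k-j}a_{mn}\,e^{i(mx+ny)} \]
converges absolutely and uniformly, the sum of the moduli of its coefficients is at most $2c$, and (being uniformly convergent) this series is the Fourier series of its sum. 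This $S_{j}$ is the natural candidate for $\partial^{k}f/\partial x^{j}\partial y^{k-j}$, and the constant $2c$ of the theorem is precisely the one it produces --- the factor $2$ being forced by $\max(|m|,|n|)^{k}\le|m|^{k}+|n|^{k}$.

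It remains to identify $S_{j}$ with the classical mixed partial derivative of $f$. For this I would use the one-variable lemma: if $g=\sum_{m,n}c_{mn}e^{i(mx+ny)}$ with $\sum_{m,n}|c_{mn}|<\infty$ and $\sum_{m,n}|m|\,|c_{mn}|<\infty$, then $\partial g/\partial x$ exists at every point and equals $\sum_{m,n}(im)c_{mn}e^{i(mx+ny)}$; indeed, the difference quotient equals $\sum_{m,n}c_{mn}\frac{e^{imh}-1}{h}e^{i(mx+ny)}$, each factor $\frac{e^{imh}-1}{h}$ is bounded in modulus by $|m|$ and tends to $im$ as $h\to0$, so one passes to the limit termwise by dominated convergence for series. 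Iterating this lemma $j$ times in $x$ and $k-j$ times in $y$ --- in whatever order --- one obtains at the intermediate stage $(a,b)$, with $a\le j$ and $b\le k-j$, the function $\sum_{m,n}(im)^{a}(in)^{b}a_{mn}e^{i(mx+ny)}$; the hypothesis required at each step is the finiteness of $\sum_{m,n}|m|^{a}|n|^{b}|a_{mn}|$, and this follows from $|m|^{a}|n|^{b}\le 1+|m|^{k}+|n|^{k}$ together with the two given bounds. This gives $\partial^{k}f/\partial x^{j}\partial y^{k-j}=S_{j}$ everywhere; since all intermediate series converge absolutely, the value is independent of the order of differentiation, so all mixed derivatives of $f$ of order $k$ exist and are represented by the $S_{j}$, with the asserted bound on the coefficients.

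The only real subtlety I anticipate is the careful justification of the term-by-term differentiation above and the precise reading of the hypothesis --- namely that ``developed into Fourier series'' means the pure derivatives coincide with their absolutely convergent Fourier series. Everything else rests on the one-line inequality $|m|^{j}|n|^{k-j}\le|m|^{k}+|n|^{k}$, which is exactly what makes the constant $2c$, rather than $c$, appear.
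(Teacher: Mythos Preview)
The paper does not supply a proof of this theorem: it is quoted in the introduction as a result of Bernstein \cite[Th.~79]{Bernstein0}, alongside Theorems~\ref{th:1.2}--\ref{th:1.5}, purely as background for Mazur's problem. So there is no ``paper's own proof'' to compare against.

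On its merits your argument is correct, and it is essentially the classical one (and, as far as one can tell, Bernstein's). The whole point is the inequality $|m|^{j}|n|^{k-j}\le\max(|m|,|n|)^{k}\le|m|^{k}+|n|^{k}$, which immediately transfers the $\ell^{1}$ bound on the coefficients of the pure derivatives to those of every mixed derivative, with the factor~$2$. Your justification of termwise differentiation via the dominated-convergence estimate $\bigl|(e^{imh}-1)/h\bigr|\le|m|$ is the standard device, and the bookkeeping for the intermediate stages is fine. The caveat you flag --- that one must read the hypothesis as guaranteeing the identification of the Fourier coefficients of $f^{(k)}_{x^k}$ with $(im)^{k}a_{mn}$ (equivalently, enough regularity to integrate by parts $k$ times) --- is genuine but belongs to the formulation rather than to the proof idea; Bernstein works in a setting where this is not an issue.
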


\begin{thm} \label{th:1.2} {\rm \cite[Th.~81]{Bernstein0}}. Let
$f(x,y)$ be a $2\pi$-periodic in both variables function, whose partial
derivatives $f^{(k)}_{x^k}$ and  $f^{(k)}_{y^k}$ satisfy the H\"older condition
with index $\alpha$. Then $f$ has for any $\alpha_1<\alpha$ all mixed derivatives
of order $k$, which satisfy the H\"older condition with
index $\alpha_1$.
\end{thm}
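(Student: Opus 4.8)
The plan is to pass to the double Fourier series $f(x,y)=\sum_{m,n}c_{mn}e^{i(mx+ny)}$ and to estimate things dyadic block by dyadic block. Fix a smooth Littlewood--Paley decomposition $f=\sum_{\nu\ge0}\Delta_\nu f$, where $\Delta_\nu f$ is a trigonometric polynomial with spectrum in the annulus $\{|m|+|n|\asymp2^\nu\}$ and $\Delta_\mu\Delta_\nu=0$ for $|\mu-\nu|\ge2$. I use two standard facts on periodic H\"older classes (see, e.g., \cite{Triebel}): for $0<\beta<1$ one has $g\in C^\beta$ if and only if $\|\Delta_\nu g\|_\infty\le C2^{-\nu\beta}$ for all $\nu$, and a $C^\beta$ function satisfies $\|\Delta_\nu g\|_\infty\le C2^{-\nu\beta}\|g\|_{C^\beta}$. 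Since $\partial/\partial x$ (written $\partial_x$ below) commutes with $\Delta_\nu$, the hypothesis $f^{(k)}_{x^k},f^{(k)}_{y^k}\in C^\alpha$ gives $\|\partial_x^k\Delta_\nu f\|_\infty,\|\partial_y^k\Delta_\nu f\|_\infty\le C2^{-\nu\alpha}$; composing with the one-variable multipliers $m\mapsto(im)^{-(k-k')}$ on $\{m\ne0\}$ and $n\mapsto(in)^{-(k-k')}$ on $\{n\ne0\}$, which act by convolution with a bounded Bernoulli-type kernel in a single variable, one obtains, for all $0\le k'\le k$,
\[
\|\partial_x^{k'}\Delta_\nu f\|_\infty\le C2^{-\nu\alpha},\qquad\|\partial_y^{k'}\Delta_\nu f\|_\infty\le C2^{-\nu\alpha}
\]
(for the lowest-order terms one splits the block into its parts with $m\ne0$ and with $m=0$, handling the latter with the $y$-multiplier).

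The core step is a Bernstein-type inequality for trigonometric polynomials, in which the admissible logarithmic loss is exactly what forces the passage to $\alpha_1<\alpha$: \emph{if $P$ has spectrum in $\{|m|+|n|\le R\}$ and $a+b=k'$ with $a,b\ge0$, then}
\[
\|\partial_x^a\partial_y^b P\|_\infty\le C_k(1+\log R)\bigl(\|\partial_x^{k'}P\|_\infty+\|\partial_y^{k'}P\|_\infty\bigr).
\]
On the Fourier side one factors $\partial_x^a\partial_y^b=A\,\partial_x^{k'}+B\,\partial_y^{k'}$, where $A$ and $B$ are the multipliers with symbols $(n/m)^b\chi_1(m,n)$ and $(m/n)^a\chi_2(m,n)$, with $\{\chi_1,\chi_2\}$ a smooth homogeneous partition of unity on $\mathbb R^2\setminus\{0\}$ such that $\operatorname{supp}\chi_1\subseteq\{|n|\le2|m|\}$ and $\operatorname{supp}\chi_2\subseteq\{|m|\le2|n|\}$; both symbols are smooth and homogeneous of degree $0$ off the origin, i.e. Calder\'on--Zygmund (Mihlin) multipliers. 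Such a multiplier is bounded on $L^p(\mathbb T^2)$ for $1<p<\infty$ but not on $L^\infty$; however, on a function band-limited to $\{|m|+|n|\le R\}$ its $L^\infty\to L^\infty$ norm is $\le C\log R$ --- the kernel, of size $|z|^{-2}$, integrated over $1/R\lesssim|z|\lesssim1$ contributes $\lesssim\log R$, while over $|z|\lesssim1/R$ one uses its cancellation together with Bernstein's inequality $\|\nabla P\|_\infty\le CR\|P\|_\infty$. Applying the factorization (noting that $\partial_x^{k'}P$ and $\partial_y^{k'}P$ are band-limited to the same set) yields the displayed bound. \textbf{This logarithmic $L^\infty$-bound for a genuinely two-dimensional singular multiplier is the single non-routine ingredient}; one cannot substitute the cheaper Theorem~\ref{th:1.1}, since passing from the sup-norm of a block to the $\ell^1$-norm of its coefficients, as that theorem requires, costs (by Cauchy--Schwarz over the $\asymp2^{2\nu}$ frequencies of the block) a factor $2^\nu$, which is fatal here.

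With $R\asymp2^\nu$ the inequality gives $\|\partial_x^a\partial_y^b\Delta_\nu f\|_\infty\le C_k\,\nu\,2^{-\nu\alpha}$ for all $a+b=k'\le k$. Hence, for each such pair $(a,b)$, the series $\sum_\nu\partial_x^a\partial_y^b\Delta_\nu f$ converges uniformly; and since the series of \emph{all} partial derivatives of order $\le k$ converge uniformly, repeated application of the classical term-by-term differentiation theorem shows that $f$ is of class $C^k$, that each mixed partial $f^{(k)}_{x^ay^b}$ exists, is continuous, and is independent of the order of differentiation --- being in each case the uniform sum $\sum_\nu\partial_x^a\partial_y^b\Delta_\nu f=\sum_{m,n}i^{k}m^an^bc_{mn}e^{i(mx+ny)}$. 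Finally, since $\Delta_\mu\Delta_\nu=0$ for $|\mu-\nu|\ge2$, we obtain $\|\Delta_\mu\bigl(f^{(k)}_{x^ay^b}\bigr)\|_\infty\le C_k\,\mu\,2^{-\mu\alpha}$, so that $2^{\mu\alpha_1}\|\Delta_\mu\bigl(f^{(k)}_{x^ay^b}\bigr)\|_\infty\le C_k\,\mu\,2^{-\mu(\alpha-\alpha_1)}$, which is bounded for every $\alpha_1<\alpha$; by the H\"older characterization recalled above, $f^{(k)}_{x^ay^b}\in C^{\alpha_1}$, which is the assertion.
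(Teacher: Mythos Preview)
The paper does not prove Theorem~\ref{th:1.2}: it is quoted from Bernstein \cite[Th.~81]{Bernstein0} as historical background in the Introduction, with no argument supplied. So there is no proof in the paper to compare your proposal against.

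That said, your argument is a reasonable modern proof, and it is worth noting that it is methodologically quite different from anything Bernstein could have written. His 1912 memoir predates Littlewood--Paley theory, Mihlin multipliers, and Calder\'on--Zygmund kernels; Bernstein worked instead with best approximation by trigonometric polynomials and his own direct and inverse theorems relating the decay of $E_N(g)$ to membership in a H\"older class, combined with one-variable Bernstein inequalities applied coordinate by coordinate. Your route is more conceptual: the single step you flag as non-routine --- the $O(\log R)$ bound on $L^\infty$ for a homogeneous degree-zero multiplier acting on functions band-limited to radius $R$ --- is exactly the mechanism that forces the drop from $\alpha$ to any $\alpha_1<\alpha$, and your proof makes this loss completely transparent. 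Bernstein's classical argument is more elementary in its tools but correspondingly longer and less explicit about where the $\varepsilon$ of regularity is spent.
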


\begin{thm} \label{th:1.3} {\rm \cite[Th.~80]{Bernstein0}}. Let
$f(x,y)$ be a $2\pi$-periodic in both variables function which has all
second partial derivatives, moreover let
$$\int_{0}^{2\pi}\int_{0}^{2\pi}(f''_{xx})^2 dxdy\leq c\;\;\;
and\;\;\;\int_{0}^{2\pi}\int_{0}^{2\pi}(f''_{yy})^2 dxdy\leq c.$$
Then $$\int_{0}^{2\pi}\int_{0}^{2\pi}(f''_{xy})^2 dxdy\leq c.\,
\footnote{The original form of the theorem is different. However,
an analysis of Bernstein's proof shows that he proved, in fact, Theorem
\ref{th:1.3}.}$$
\end{thm}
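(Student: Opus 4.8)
The plan is to pass to double Fourier series, so that the whole statement collapses onto an elementary inequality between the coefficients. Write $f(x,y)=\sum_{m,n\in\mathbb Z}c_{mn}e^{i(mx+ny)}$; formally $f''_{xx}$, $f''_{yy}$, $f''_{xy}$ have Fourier coefficients $-m^2c_{mn}$, $-n^2c_{mn}$, $-mnc_{mn}$. By Parseval the two hypotheses become $\sum_{m,n}m^4|c_{mn}|^2\le c/(2\pi)^2$ and $\sum_{m,n}n^4|c_{mn}|^2\le c/(2\pi)^2$, and the conclusion becomes $\sum_{m,n}m^2n^2|c_{mn}|^2\le c/(2\pi)^2$. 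This last bound follows at once from $2m^2n^2\le m^4+n^4$:
\[
\sum_{m,n}m^2n^2|c_{mn}|^2\le\tfrac12\Bigl(\sum_{m,n}m^4|c_{mn}|^2+\sum_{m,n}n^4|c_{mn}|^2\Bigr)\le\frac{c}{(2\pi)^2}.
\]

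All the actual labour is in justifying the three coefficient formulas under the bare hypotheses of Theorem~\ref{th:1.3}, which only grant that all second partials exist \emph{pointwise} and that $f''_{xx},f''_{yy}\in L^2$. I would first invoke the classical one-variable fact that a function differentiable at every point of an interval with Lebesgue integrable derivative is absolutely continuous and equals the integral of its derivative. Reading this along horizontal lines: since $f''_{xx}\in L^2$ of the square, Fubini gives $f''_{xx}(\cdot,y)\in L^1$ for a.e.\ $y$, so $f'_x(\cdot,y)$ is absolutely continuous (hence continuous and bounded) for a.e.\ $y$, and consequently $f(\cdot,y)$ is absolutely continuous for a.e.\ $y$ with derivative $f'_x(\cdot,y)$; symmetrically in $y$. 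Using the representation $f'_x(t,y)=f'_x(0,y)+\int_0^t f''_{xx}(s,y)\,ds$ together with the boundedness of $y\mapsto f'_x(0,y)$ (it is everywhere differentiable, as $f''_{xy}$ exists), one sees that $f$ and $f'_x$ lie in $L^1$, and in fact $f'_x\in L^2$, of the square (and $f$ is jointly measurable, being separately continuous). Now integration by parts in $x$ --- legitimate by the absolute continuity just noted, with boundary terms vanishing by $2\pi$-periodicity --- combined with Fubini yields $\widehat{f''_{xx}}(m,n)=-m^2c_{mn}$, $\widehat{f'_x}(m,n)=imc_{mn}$, and symmetrically $\widehat{f''_{yy}}(m,n)=-n^2c_{mn}$. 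Parseval applied to the $L^2$ functions $f''_{xx},f''_{yy}$ now gives the two coefficient bounds, hence $\sum m^2n^2|c_{mn}|^2\le c/(2\pi)^2$; and Parseval for $f'_x\in L^2$ gives $\sum m^2|c_{mn}|^2<\infty$.

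It remains to see that the $L^2$ function $g$ with Fourier coefficients $-mnc_{mn}$ is (a.e.) the pointwise partial derivative $f''_{xy}$ --- the genuinely delicate point, since one may not assume in advance that $f''_{xy}$ is integrable. Here is the intended argument. As $f'_x\in L^2$ of the square with $\widehat{f'_x}(m,n)=imc_{mn}$, for a.e.\ $x$ the section $f'_x(x,\cdot)$ belongs to $L^2[0,2\pi]$, with $y$-Fourier coefficients $\beta_n(x)=\sum_m im c_{mn}e^{imx}$, and
\[
\int_0^{2\pi}\sum_n n^2|\beta_n(x)|^2\,dx=2\pi\sum_{m,n}m^2n^2|c_{mn}|^2<\infty,
\]
so $\sum_n n^2|\beta_n(x)|^2<\infty$ for a.e.\ $x$. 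For such an $x$ the map $y\mapsto f'_x(x,y)$ is continuous (being everywhere differentiable in $y$, with derivative $f''_{xy}(x,y)$) and equals a.e.\ the continuous function $\sum_n\beta_n(x)e^{iny}$, so the two are identical; thus $f'_x(x,\cdot)$ is absolutely continuous in $y$ and $f''_{xy}(x,\cdot)$ has $y$-Fourier coefficients $in\beta_n(x)$. Integrating, $\int_0^{2\pi}|f''_{xy}(x,y)|^2\,dy=2\pi\sum_n n^2|\beta_n(x)|^2$, and then
\[
\int_0^{2\pi}\!\!\int_0^{2\pi}(f''_{xy})^2\,dx\,dy=(2\pi)^2\sum_{m,n}m^2n^2|c_{mn}|^2\le c.
\]

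The principal obstacle is exactly this regularity bookkeeping and the apparent circularity it must circumvent: one would like $f''_{xy}\in L^1$ in order to compute its Fourier coefficients, yet membership in $L^1$ (indeed in $L^2$) is part of what is being proved. The resolution is to deduce $\sum m^2n^2|c_{mn}|^2<\infty$ from the two hypotheses \emph{first}, via the ``differentiable everywhere $\Rightarrow$ absolutely continuous'' lemma and Fubini, and only afterwards to use this summability to control the $y$-sections of $f'_x$ and identify $f''_{xy}$. (Bernstein presumably worked under smoothness hypotheses making all these integrations by parts immediate; under the assumptions of Theorem~\ref{th:1.3} they must be earned.) Everything else is routine Fourier analysis.
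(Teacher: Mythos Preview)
The paper does not prove Theorem~\ref{th:1.3}; it is quoted in the Introduction as a background result of Bernstein, with only a footnote remarking that the stated form is what Bernstein's argument actually yields. There is therefore no proof in the paper against which to compare your proposal.

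For what it is worth, your approach --- reduce to Fourier coefficients via Parseval and use $2m^2n^2\le m^4+n^4$ --- is surely the intended one (the cited memoir is on trigonometric approximation), and your handling of the regularity bookkeeping under the bare pointwise hypotheses looks sound. The one-variable lemma you invoke (everywhere differentiable with integrable derivative implies absolutely continuous) is exactly what is needed to legitimate the integrations by parts, and your two-stage argument --- first obtain $\sum m^2n^2|c_{mn}|^2<\infty$ from the hypotheses alone, then use this to show the $y$-sections of $f'_x$ are absolutely continuous and identify $f''_{xy}$ --- does circumvent the circularity you correctly flag. Bernstein himself presumably worked under smoothness assumptions making all of this automatic, as you suspect.
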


On the other hand, in the frame of the descriptive functions theory
Tolstov \cite{Tolstov} has proved the following statements.

\begin{thm}\label{th:1.4}
If a function $f(x,y)$ is separately continuous on a rectangle
and has the derivative $f''_{xx}$ everywhere, then $f''_{xx}$
is of the first Baire class.
\end{thm}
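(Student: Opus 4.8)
The plan is to write $f''_{xx}$ as a pointwise limit of separately continuous functions and then to upgrade, by a Baire category argument, the easy conclusion ``$f''_{xx}$ is of Baire class $2$'' to ``of Baire class $1$''. Denote the rectangle by $P=[a,b]\times[c,d]$ and set $g=f''_{xx}$. For $t\ne 0$ put
\[
\psi_t(x,y)=\frac{f(x+t,y)-2f(x,y)+f(x-t,y)}{t^{2}},
\]
with the obvious one-sided modification near the vertical edges of $P$. Since $f$ is separately continuous, so is each $\psi_t$, hence $\psi_t$ is of the first Baire class by the classical Lebesgue--Baire theorem; and since $x\mapsto f(x,y)$ is twice differentiable, the one-variable fact (the second-order Taylor formula at a single point, or two applications of the mean value theorem) gives $\psi_t(x,y)\to g(x,y)$ as $t\to 0$, for every $(x,y)$. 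Taking $t=1/n$ already yields that $g$ is of Baire class at most $2$ and that $g(\cdot,y)$ and $g(x,\cdot)$ are of the first class. The substance of the theorem is to gain one class for $g$ itself.

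I would then use Baire's characterisation of the first class: $g$ is of the first class iff $g|_F$ has a point of continuity for every non-empty closed $F\subseteq P$. Fix such an $F$ and $\varepsilon>0$. It suffices to produce a non-empty \emph{relatively open} $V\subseteq F$ and an index $M$ with $|g-\psi_{1/M}|\le\varepsilon$ on $V$; for then, picking a continuity point $p_0\in V$ of the first-class function $\psi_{1/M}|_V$, we get $\operatorname{osc}(g|_F,p_0)\le 2\varepsilon$, and letting $\varepsilon=1/k$ and applying the Baire category theorem inside $F$ gives a dense $G_\delta$ of continuity points of $g|_F$.

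To find $V$ and $M$ I would stratify $F$ according to the rate of convergence. For $M\ge 1$ set $\sigma_M(p)=\sup\{\,|\psi_s(p)-\psi_t(p)|:0<|s|,|t|\le 1/M\,\}$. Then $\sigma_M$ is finite, decreases to $0$ at each point (Cauchy criterion), satisfies $|g-\psi_{1/M}|\le\sigma_M$ everywhere, and, because $s\mapsto\psi_s(p)$ is continuous, $\sigma_M$ is a \emph{countable} supremum of functions $|\psi_s-\psi_t|$, each separately continuous. Consequently $\{\sigma_M\le\varepsilon\}=\bigcap_{s,t}\{|\psi_s-\psi_t|\le\varepsilon\}$ (rational $s,t$) is a \emph{separately closed} subset of $F$, and $F=\bigcup_M\{\sigma_M\le\varepsilon\}$. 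If these strata were genuinely closed, the Baire category theorem in $F$ would at once give an $M$ with $V:=\operatorname{int}_F\{\sigma_M\le\varepsilon\}\neq\varnothing$, finishing the proof.

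The hard part is exactly that the strata $\{\sigma_M\le\varepsilon\}$ are only separately closed, not closed, so the Baire category theorem does not apply to them verbatim; overcoming this is the whole difference between Baire class $2$ and Baire class $1$. I see two routes, each requiring genuine work. One is to prove, in the spirit of the Lebesgue--Baire theorem, that every separately lower semicontinuous function on $P$ is of the first Baire class; this makes $\sigma_M$ of the first class, so $\{\sigma_M<\varepsilon\}$ is $F_\sigma$, and then $F=\bigcup_M\{\sigma_M<\varepsilon\}$ together with the Baire category theorem produces a relatively open $V$ on which $\sigma_M<\varepsilon$. The other is to pass to the dense $G_\delta$ set $E\subseteq P$ of common joint-continuity points of all the $\psi_{1/n}$, to recover on $E$ the estimate $|g-\psi_{1/M}|\le\varepsilon$ from its validity on a dense subset of a stratum (using that the $\psi_{1/n}$ are continuous at points of $E$), and then to carry out the extra argument needed to convert this ``comeager'' control of $g$ on a relatively open set into genuine control there. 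Closing that last gap is the technical heart of the theorem. Besides this crux, the remaining matters are routine and I would only indicate them: the one-sided form of $\psi_t$ near $\partial P$, the first-class assertions for the partial functions of $g$, local boundedness reductions for $\sigma_M$, and the passage from a countable family of closed sub-rectangles to all of $P$.
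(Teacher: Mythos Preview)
The paper does not contain a proof of Theorem~\ref{th:1.4}: it is quoted as a result of Tolstov and merely cited from \cite{Tolstov}. There is therefore no in-paper argument to compare your attempt against.

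As for your outline on its own merits, you locate the difficulty correctly---the second differences $\psi_{1/n}$ are only separately continuous, so the trivial limit argument yields Baire class~$2$, and one must somehow gain one class---but by your own admission the proof is not complete. Both ``routes'' terminate in an unproved assertion: route~1 rests on the lemma that every separately lower semicontinuous function on a rectangle is of the first Baire class, which you neither prove nor cite (it is certainly not a standard textbook fact, and it is not clear it holds at that level of generality rather than for the particular countable suprema of separately continuous functions you actually need); route~2 you explicitly leave open (``closing that last gap is the technical heart of the theorem''). What you have produced is an honest roadmap that isolates the crux, not a proof; the missing step is precisely the substance of Tolstov's theorem, and to fill it you would have to consult \cite{Tolstov} directly.
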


\begin{thm}\label{th:1.5}
Let $f(x,y)$ be defined on a rectangle $P$, and $f'_{x}$,
$\,f'_{y}$ have all finite derivative numbers with respect to each variable
on some subset $E\subseteq P$ of positive measure. Then a.e. on $E$ there
exist and are equal the mixed derivatives $f''_{xy}$ and $f''_{yx}$.
\end{thm}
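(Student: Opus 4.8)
The plan is to establish, separately, the existence of $f''_{xy}$ and $f''_{yx}$ almost everywhere on $E$ and their coincidence almost everywhere, after a routine reduction. Throughout I assume (as the statement tacitly requires) that $f'_x$ and $f'_y$ exist on all of $P$, and that $f$, hence $F:=f'_x$ and $G:=f'_y$, is measurable, so that the relevant sets are measurable. For the reduction: given $(x_0,y_0)\in E$, the finiteness of the four derivate numbers of $F$ in each variable at $(x_0,y_0)$, and of $G$ likewise, furnishes $m\in\mathbb N$ with
$$|F(x_0+h,y_0)-F(x_0,y_0)|\le m|h|,\qquad|F(x_0,y_0+k)-F(x_0,y_0)|\le m|k|,$$
and the two analogous inequalities for $G$, whenever $|h|,|k|<1/m$. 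Partitioning $E$ according to the least such $m$, it suffices to prove the theorem on each piece, so I may assume a single constant $L$ works at every point of $E$. The key feature is that these inequalities bound $F$ and $G$ along the \emph{entire} horizontal and vertical lines through each point of $E$; the second endpoint is not required to lie in $E$.

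For existence, fix $x_0$ and put $E^{x_0}=\{y:(x_0,y)\in E\}$. Every $y\in E^{x_0}$ is, by hypothesis, a point at which the one-variable function $y\mapsto F(x_0,y)$ has all four Dini derivatives finite, so by the Denjoy--Young--Saks theorem this function is differentiable at almost every point of $E^{x_0}$; that is, $f''_{xy}(x_0,y)$ exists for a.e.\ $y\in E^{x_0}$. Hence every vertical section of $N:=\{(x,y)\in E:f''_{xy}(x,y)\text{ does not exist}\}$ is Lebesgue-null, and Fubini's theorem gives $|N|=0$. So $f''_{xy}$ exists a.e.\ on $E$, and, by the symmetric argument using the hypothesis on $G$ in the variable $x$, so does $f''_{yx}$.

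For the coincidence I would use the symmetric second difference
$$\Delta(x_0,y_0;h,k):=f(x_0+h,y_0+k)-f(x_0+h,y_0)-f(x_0,y_0+k)+f(x_0,y_0),\qquad\omega:=\frac{\Delta}{hk},$$
which is invariant under interchanging the two coordinate directions. Since $f'_x$, $f'_y$ exist, $\lim_{h\to0}\omega(x_0,y_0;h,k)=k^{-1}\bigl(F(x_0,y_0+k)-F(x_0,y_0)\bigr)$, whose limit as $k\to0$ is $f''_{xy}(x_0,y_0)$ wherever this exists; symmetrically the other iterated limit equals $f''_{yx}(x_0,y_0)$. Thus, by the elementary iterated-limit lemma, at any point where both mixed derivatives exist and the \emph{double} limit $\lim_{(h,k)\to(0,0)}\omega$ exists, the two agree. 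Now the mean value theorem gives $\omega(x_0,y_0;h,k)=k^{-1}\bigl(F(\xi,y_0+k)-F(\xi,y_0)\bigr)$ with $|\xi-x_0|\le|h|$, and symmetrically $\omega(x_0,y_0;h,k)=h^{-1}\bigl(G(x_0+h,\eta)-G(x_0,\eta)\bigr)$ with $|\eta-y_0|\le|k|$. Consequently, if for a.e.\ $(x_0,y_0)\in E$ one has $F(x_0+a,y_0+k)-F(x_0+a,y_0)=f''_{xy}(x_0,y_0)\,k+o(|k|)$ uniformly for $|a|\le|k|$ (in particular if $F$ is honestly differentiable at $(x_0,y_0)$), then $\omega\to f''_{xy}(x_0,y_0)$ whenever $(h,k)\to(0,0)$ with $|h|\le|k|$; the symmetric statement for $G$ gives $\omega\to f''_{yx}(x_0,y_0)$ when $|k|\le|h|$; and since the two regimes together exhaust all $(h,k)\ne(0,0)$ and overlap along $h=k$, the double limit of $\omega$ exists and the mixed derivatives coincide a.e.\ on $E$.

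The remaining lemma --- the above uniform $y$-differentiability of $F$ (and the symmetric one for $G$) a.e.\ on $E$ --- is where I expect the real difficulty. It is precisely here that the hypotheses on $f'_y$ must be used to control $f'_x$ (and conversely), as they must be: a derivative bounded merely along two transversal lines through each point of $E$ need not be differentiable there, and the mean-value point $\xi$ can leave $E$. The natural line of attack: first, using the $E$-uniform bounds together with the Lebesgue density theorem, show that $F$ is approximately differentiable at a.e.\ point of $E$ --- indeed, for a.e.\ $(x_0,y_0)\in E$ the displacements $a$ with $(x_0+a,y_0)\in E$ form a set of density one at $0$, along which $F(x_0+a,y_0+k)-F(x_0+a,y_0)$ is bounded by $L|k|$ by the inequality at $(x_0+a,y_0)$, so $F$ is approximately pointwise-Lipschitz, hence (by a Stepanov-type theorem) approximately differentiable, a.e.\ on $E$; second, upgrade this to the honest uniform statement by a Vitali-type covering argument that confines the mean-value points $\xi$ to that density-one set --- on which the slice derivatives $\partial_yF(x_0+a,y_0)$ are available and close to $f''_{xy}(x_0,y_0)$ --- while absorbing the remaining $a$ via the pointwise-Lipschitz bound. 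Executing this upgrade, and verifying that the exceptional null set is independent of the mode of approach of $(h,k)$ to $(0,0)$, is the crux of the proof.
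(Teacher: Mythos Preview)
The paper does not prove Theorem~1.5. It is quoted in the Introduction as a result of Tolstov \cite{Tolstov}, alongside the other cited theorems of Bernstein and Tolstov, with no argument supplied; the paper's own contributions begin in Section~2. There is therefore no ``paper's own proof'' to compare your proposal against.

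On the proposal itself: your existence half is essentially correct --- finiteness of all four Dini derivatives of $y\mapsto F(x_0,y)$ on $E^{x_0}$ forces differentiability a.e.\ on $E^{x_0}$ by Denjoy--Young--Saks, and Fubini (granted the measurability you flag) gives $f''_{xy}$ a.e.\ on $E$; symmetrically for $f''_{yx}$. The equality half, however, is a programme rather than a proof. You correctly isolate the obstruction: the mean-value point $\xi$ may leave $E$, so the Lipschitz control you have at points of $E$ does not directly bound $k^{-1}\bigl(F(\xi,y_0+k)-F(\xi,y_0)\bigr)$. Your proposed remedy --- approximate differentiability of $F$ a.e.\ on $E$ via a Stepanov-type theorem, then an ``upgrade'' to the uniform-in-$a$ estimate by a density/Vitali argument --- is the right shape, but the upgrade step is precisely the substantive content of the theorem and you have not carried it out. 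In particular, confining $\xi$ to a density-one subset of $E$ is not in your hands (the mean-value theorem chooses $\xi$), so one must instead compare $F(\xi,\cdot)$ to $F(x_0+a,\cdot)$ for some nearby $a$ with $(x_0+a,y_0)\in E$ and control the discrepancy; this requires the hypothesis on the $x$-derivates of $F$ (which you have not yet used) and a careful $o$-estimate. As written, the proof is incomplete at exactly the point you yourself identify as the crux.
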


Theorem \ref{th:1.5} implies, in particular, that if $f(x,y)$ has a.e.
all second  partial derivatives then the mixed derivatives are equal a.e.
Moreover, Tolstov has constructed \cite{Tolstov1} a function of two variables
having jointly continuous first partial derivatives and mixed second partial
derivatives  which are different on a set of positive measure.

In this paper we show that Schauder's remark is valid under significantly
weaker additional assumptions. More exactly, we prove that the Mazur
problem has a positive answer if $f'_{x}$ and $f'_{y}$ have finite variations in the
Tonelli sense. As a byproduct, we obtain a new result on measurability of
existence points for a partial derivative (Proposition \ref{pr:2.1}). Finally,
we solve the Mazur problem in the negative by constructing a separately twice
differentiable function $f$ such that $f'_x$ is discontinuous with respect to $y$
at all points of a set of positive measure.

\section{Tonelli variation and mixed derivatives}

Given a function $f:[a,b]\times[c,d]\to\mathbb R$ and $x\in[a,b]$,
we denote by $V_1(x)$ the variation of the function $f^x:[c,d]\to\mathbb R\,$,
$\,f^x(y):=f(x,y)$, and given $y\in[c,d]$ we denote by $V_2(y)$ the variation of
$f_y:[a,b]\to\mathbb R\,$, $\,f_y(x):=f(x,y)$ (these variations may be infinite).
Note that $V_1(x)$ is lower semicontinuous if $f$ is continuous
with respect to $x$ and similarly for $V_2(y)$. A function $f$ is
of {\it Tonelli bounded variation} \cite[p.~169]{S} if $\int_a^b V_1(x)dx <\infty$
and $\int_c^d V_2(y)dy<\infty$. All integrals we consider are Lebesgue integrals.

\begin{prop} \label{pr:2.1} Let $f:\mathbb R^2\to\mathbb R$ be continuous with
respect to $y$. Then the set $E$ of all points $(x,y)\in\mathbb R^2$ at which there
exists $f'_x$, has type $F_{\sigma\delta}$.
\end{prop}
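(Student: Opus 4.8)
The plan is to express $E$ using a countable sequence of quantifiers over the existence of the difference-quotient limit, and then to verify that each resulting piece has the right Borel complexity, exploiting continuity in $y$ to pass from rational to real increments. Recall that $(x,y)\in E$ precisely when the limit $\lim_{h\to 0}\frac{f(x+h,y)-f(x,y)}{h}$ exists and is finite; by the Cauchy criterion this is equivalent to: for every $k\in\mathbb N$ there is $n\in\mathbb N$ such that for all nonzero $h,h'$ with $|h|,|h'|<1/n$ one has
\[
\left|\frac{f(x+h,y)-f(x,y)}{h}-\frac{f(x+h',y)-f(x,y)}{h'}\right|\le\frac1k .
\]
First I would fix $k$ and $n$ and let $A_{k,n}$ be the set of $(x,y)$ satisfying the displayed inequality for all admissible $h,h'$. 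Then $E=\bigcap_k\bigcup_n A_{k,n}$, so once each $A_{k,n}$ is shown to be closed (hence $F_\sigma$ trivially, and the whole thing $F_{\sigma\delta}$) — or at least $F_\sigma$ — we are done, since $\bigcup_n A_{k,n}$ is then $F_\sigma$ and the intersection over $k$ is $F_{\sigma\delta}$.

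The key step is therefore to analyse $A_{k,n}$. The obstacle is that $f$ is only assumed continuous in $y$, not jointly continuous, so the map $(x,y)\mapsto f(x+h,y)-f(x,y)$ need not be continuous and the ``for all $h,h'$'' intersection of closed sets need not be closed. The device to get around this is to replace the uncountable family of real increments by a countable dense family of rational ones, using continuity in $y$ together with a uniformity trick. Concretely, I would show that $A_{k,n}$ coincides (up to shrinking $n$ by a controlled amount, or by a diagonal passage over an auxiliary parameter) with the set $A'_{k,n}$ defined by requiring the inequality only for rational $h,h'$ with $|h|,|h'|<1/n$; since $f$ is continuous in $y$, for fixed rational $h,h'$ the function $(x,y)\mapsto\frac{f(x+h,y)-f(x,y)}{h}-\frac{f(x+h',y)-f(x,y)}{h'}$ is, for each fixed $x$, continuous in $y$, which together with an appropriate measurability/semicontinuity input (of the type ``continuity in one variable gives Baire-one, hence Borel'') lets me conclude that $A'_{k,n}$ is a Borel set of the desired additive class. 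Passing from rational to real $h,h'$ is where continuity in $y$ is genuinely used: the difference quotient in the $x$-direction cannot be controlled by continuity, but one only needs the two fixed base values $f(x,y)$ and the perturbed values $f(x+h,y)$, and a limiting argument along rationals $h_j\to h$ must be justified — this is the delicate point, because $f(x+h_j,y)\to f(x+h,y)$ is exactly what is \emph{not} given.

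Accordingly, the real heart of the argument is a reformulation that avoids needing $f(x+h_j,y)\to f(x+h,y)$: instead of fixing the target point and perturbing, I would phrase the Cauchy condition symmetrically and introduce, for each pair of rationals, a sub-level set of a Baire-one function, then observe that the existence of the derivative is equivalent to membership in a countable Boolean combination built from these sub-level sets with the quantifier pattern $\forall k\,\exists n\,\forall(h,h')$ collapsed to $\forall k\,\exists n\,\forall(\text{rational }h,h')$ by a standard argument (continuity in $y$ guarantees that the real-increment condition and its rational-increment restriction agree on a set differing by something negligible for the $F_{\sigma\delta}$ computation, or even agree outright after a harmless adjustment of constants). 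Once the quantifier block over $(h,h')$ is countable and each atomic set is $F_\sigma$ (a sub-level set of a lower-semicontinuous-in-$y$, Borel-in-$x$ function), the union over $n$ is $F_\sigma$ and the intersection over $k$ yields $F_{\sigma\delta}$. I expect the main obstacle to be precisely this passage from real to rational increments under only one-variable continuity, and I would handle it by the symmetric Cauchy reformulation sketched above rather than by a naive approximation of $f(x+h,y)$.
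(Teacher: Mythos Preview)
You correctly locate the obstruction: with only continuity in $y$, the map $(x,y)\mapsto f(x,y)$ need not even be Borel (take $f(x,y)=\chi_S(x)$ with $S\subset\mathbb R$ non-Borel), so for fixed $h,h'$ the sublevel set of $(x,y)\mapsto\frac{f(x+h,y)-f(x,y)}{h}-\frac{f(x+h',y)-f(x,y)}{h'}$ is closed in $y$ for each fixed $x$ but has no reason to be closed, $F_\sigma$, or Borel in $\mathbb R^2$. In particular the claim ``continuity in one variable gives Baire-one'' is false (separate continuity does; one-sided continuity does not), so neither the rational-increment reduction nor the hoped-for Baire-class bound can be carried out. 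Your ``symmetric reformulation'' is the right instinct but, as stated, not a proof: the problem is not the increments $h,h'$ but the anchoring at $x$, which forces the value $f(x_j,y_j)$ into the limit computation when checking closedness along $(x_j,y_j)\to(x_0,y_0)$.

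The missing device is to remove $x$ from the evaluation points altogether. The paper replaces the anchored quotient $\frac{f(x+h,y)-f(x,y)}{h}$ by the two-point quotient $\frac{f(u,y)-f(v,y)}{u-v}$ with $u\in(x,x+\tfrac1n)$, $v\in(x-\tfrac1n,x)$, and sets
\[
B_{m,n}=\Bigl\{(x,y):\ \Bigl|\tfrac{f(u,y)-f(v,y)}{u-v}-\tfrac{f(u',y)-f(v',y)}{u'-v'}\Bigr|\le\tfrac1m\ \text{for all such }u,u',v,v'\Bigr\}.
\]
If $(x_j,y_j)\to(x_0,y_0)$ with each $(x_j,y_j)\in B_{m,n}$, one \emph{fixes} admissible $u,u',v,v'$ for $x_0$; for large $j$ they are admissible for $x_j$ as well, and continuity in $y$ alone gives $f(u,y_j)\to f(u,y_0)$, etc., so the inequality passes to the limit and $B_{m,n}$ is honestly closed. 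The resulting set $B=\bigcap_m\bigcup_n B_{m,n}$ is the set where the two-sided limit $\lim_{(u,v)\to(x^+,x^-)}\frac{f(u,y)-f(v,y)}{u-v}$ exists; this equals $E$ only after intersecting with the set $A$ of continuity points of $f$ in $x$, which is handled by the analogous closed sets $A_{m,n}=\{(x,y):|f(u,y)-f(v,y)|\le\tfrac1m\ \forall\,u,v\in(x-\tfrac1n,x+\tfrac1n)\}$. Then $E=A\cap B$ is $F_{\sigma\delta}$.
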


\begin{proof}
Given $m,n \in\mathbb N$, denote by $A_{m,n}$ the set of
all points $(x,y)\in\mathbb R^2$ such that for all
$u,v\in (x-\frac{1}{n},\,x+\frac{1}{n})$
$$|f(u,y)-f(v,y)|\le\frac{1}{m}\,,$$
and by $B_{m,n}$ the set of points $(x,y)\in\mathbb R^2$ such that for all
$u,u'\in (x,\,x+\frac{1}{n})$ and $v,v'\in (x-\frac{1}{n},\,x)$  $$\left|\frac{f(u,y)-f(v,y)}{u-v}-\frac{f(u',y)-f(v',y)}{u'-v'}\right|\le\frac{1}{m}\,.$$
All the sets $A_{m,n}$, $B_{m,n}$ are closed, so the sets
$$A=\bigcap\nolimits_{m\in\mathbb N}\bigcup\nolimits_{n\in\mathbb N}A_{m,n}\;\;\;\;
\mbox{and}\;\;\;\;
B=\bigcap\nolimits_{m\in\mathbb N}\bigcup\nolimits_{n\in\mathbb N} B_{m,n}$$
have type $F_{\sigma\delta}$. It remains to note that $A$ is the set of all
continuity points of $f$ with respect to $x$, $\,B$ is the
set of all points $(x,y)$ for which there exists the finite
limit $$\lim\limits_{(u,v)\to(x+0,x-0)}\frac{f(u,y)-f(v,y)}{u-v}\,,$$ and
$E=A\cap B$.
\end{proof}

Note that Proposition \ref{pr:2.1} does not follow from mentioned in Introduction
papers \cite{Ser}, \cite{MM}. This statement looks to be new, even for functions of
one variable (i.e. when $f$ is constant with respect to $y$). Similar results
for continuous functions of one variable one can find in \cite[p.~309]{Hau} or
\cite[p.~228]{Bru}.

\begin{prop} \label{pr:2.2} Let $P=[a,b]\times[c,d]$\,, $\,f:P\to\mathbb R$
be continuous with respect to $y$ and for almost all
$y\in[c,d]$ the function $f_y(x):=f(x,y)$ have finite variation. Then
a.e. on $P$ there exists the partial derivative $f'_x$.
\end{prop}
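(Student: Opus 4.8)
The plan is to reduce this to the classical one-variable theorem that a function of bounded variation is differentiable almost everywhere, combined with a Fubini argument, and then to control the exceptional set using the measurability information from Proposition \ref{pr:2.1}. First I would fix, for each $y$ in the full-measure set $G\subseteq[c,d]$ on which $f_y$ has finite variation, the set $D_y\subseteq[a,b]$ of points $x$ at which $f'_x(x,y)$ exists; by the Lebesgue differentiation theorem for BV functions, $[a,b]\setminus D_y$ has linear measure zero for every $y\in G$. The natural conclusion ``$f'_x$ exists a.e.\ on $P$'' would then follow immediately from Fubini's theorem \emph{provided} the set
$$E=\{(x,y)\in P:\ f'_x(x,y)\ \text{exists}\}$$
is measurable in the plane. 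This is exactly where Proposition \ref{pr:2.1} enters: since $f$ is continuous with respect to $y$, that proposition tells us $E$ is of type $F_{\sigma\delta}$, hence Lebesgue measurable.

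So the argument would run: by Proposition \ref{pr:2.1}, $E$ is measurable; for each $y\in G$ the section $E^y=\{x:(x,y)\in E\}$ contains $D_y$ and therefore has full linear measure in $[a,b]$; since $[c,d]\setminus G$ is null, almost every horizontal section of $E$ is conull; by Fubini (applied to the indicator of the complement $P\setminus E$), the complement $P\setminus E$ is a null set; hence $f'_x$ exists a.e.\ on $P$. The only genuinely substantive input beyond Proposition \ref{pr:2.1} is the classical fact that a real function of bounded variation on an interval is differentiable almost everywhere, applied sectionwise.

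I do not expect a serious obstacle here; the main subtlety to be careful about is simply the bookkeeping that makes Fubini legitimate — namely that we must invoke the planar measurability of $E$ (which is not automatic and is precisely the content of Proposition \ref{pr:2.1}) before concluding that ``conull sections'' forces ``conull set.'' One should also note that continuity of $f$ in $y$ is used twice: once to apply Proposition \ref{pr:2.1}, and implicitly it guarantees nothing extra is needed for the BV sections since finite variation of $f_y$ is assumed outright. If anything needs care it is only checking that $G$ can be taken measurable (it can: $\{y:V_2(y)<\infty\}$ is measurable because $V_2$ is a pointwise supremum over a countable dense family of partition sums of a function measurable in $y$), so that ``$[c,d]\setminus G$ is null'' has content.
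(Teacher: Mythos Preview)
Your proposal is correct and follows essentially the same route as the paper: invoke Proposition~\ref{pr:2.1} to get measurability of $E$, use the one-variable BV differentiability theorem to make almost every horizontal section of $P\setminus E$ null, and then apply Fubini. Your extra remark on the measurability of the set $G=\{y:V_2(y)<\infty\}$ is a harmless refinement; the paper simply asserts the existence of such a full-measure set without further comment.
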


\begin{proof} By Proposition \ref{pr:2.1}, the set
$E=\{(x,y)\in P: \exists\,f'_x(x,y)\}$
is measurable. Hence, $F=P\setminus E$ is also measurable.

By the assumptions of Proposition \ref{pr:2.2}, we can choose a subset $A\subseteq [c,d]$
with Lebesgue measure $\mu(A)=d-c$ such that each function $f_y$,
$\,y\in A$, has finite variation on $[a,b]$. It is well known (see e.g.
\cite[Ch.~VIII, \S2, Th.~4]{Nat}) that monotone functions (hence, functions of finite
variation) have derivative a.e. So, $\mu(F\cap([a,b]\times\{y\}))=0$ for each
$y\in A$. Now, by the Foubini theorem (or by \cite[Ch.~XI, \S5, Th.~1]{Nat}), $\mu(F)=0$.
\end{proof}

The next corollary shows that in the remark of p.~Schauder the assumption
of square integrability of $f''_{xx}$ and $f''_{yy}$ is superfluous.

\begin{cor} \label{cor:2.3}
Let $P=[a,b]\times[c,d]$\,, $\,f:P\to\mathbb R$
be a continuously differentiable with respect to $y$ function, and for
almost all $y\in[c,d]$ the function $\,F_y(x):=f'_y(x,y)$ have finite
variation $($e.g. let $f'_y$ have finite Tonelli variation$)$. Then a.e.
on $P$ there exists the partial derivative $f''_{yx}$.
\end{cor}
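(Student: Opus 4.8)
The plan is to deduce this immediately from Proposition \ref{pr:2.2} applied to the first partial derivative $g(x,y):=f'_y(x,y)$ in place of $f$. First I would check the two hypotheses of Proposition \ref{pr:2.2} for $g$. Since $f$ is continuously differentiable with respect to $y$, the function $g$ is defined on all of $P$ and, for each fixed $x\in[a,b]$, the map $y\mapsto g(x,y)=f'_y(x,y)$ is continuous; that is, $g$ is continuous with respect to $y$, which is the first hypothesis. Second, for almost all $y\in[c,d]$ the section $g_y(x)=g(x,y)=F_y(x)$ has finite variation on $[a,b]$ --- but this is exactly the assumption of the corollary.

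With both hypotheses verified, Proposition \ref{pr:2.2} yields that a.e. on $P$ there exists the partial derivative $g'_x$. Since $g'_x(x,y)=(f'_y)'_x(x,y)=f''_{yx}(x,y)$ wherever it exists, this is precisely the assertion that $f''_{yx}$ exists a.e. on $P$.

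It remains to justify the parenthetical remark, i.e.\ that finite Tonelli variation of $f'_y$ is a special case of the stated hypothesis. If $f'_y$ has finite Tonelli variation then, by the definition recalled at the beginning of the section (applied to $f'_y$ rather than $f$), $\int_c^d V_2(y)\,dy<\infty$, where here $V_2(y)$ denotes the variation of the function $x\mapsto f'_y(x,y)=F_y(x)$; hence $V_2(y)<\infty$ for almost every $y\in[c,d]$, so $F_y$ has finite variation for almost all $y$, as required. I do not expect any real obstacle: the whole argument is bookkeeping once Proposition \ref{pr:2.2} is available, and the only point needing a moment's thought is that ``continuously differentiable with respect to $y$'' already guarantees the continuity of $f'_y$ in $y$ that is needed to invoke Proposition \ref{pr:2.2}.
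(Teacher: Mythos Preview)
Your proposal is correct and follows exactly the paper's approach: the paper's entire proof is the single sentence ``We can use Proposition \ref{pr:2.2} taking $f'_y$ instead of $f$.'' You have simply spelled out the hypothesis check in more detail and additionally justified the parenthetical Tonelli-variation remark, which the paper leaves implicit.
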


\begin{proof}
We can use Proposition \ref{pr:2.2} taking $f'_y$ instead of $f$.
\end{proof}

\section{Example}

For a real valued function $f$, denote $\mathrm{supp}f=\{x\in \mathbb{R}:f(x)\ne 0\}$.

\begin{lem} \label{lem:5.1} Let $I_n=(a_n,b_n)$ be pairwise
disjoint intervals and $\psi_n:\mathbb{R}\to\mathbb R$ be
differentiable functions such that $\,\mathrm{supp}\,\psi_n\subset I_n$, $n=1,2,\dots$,
and $\sup_\mathbb{R}\psi'_n(x)\to 0$ as $n\to\infty$.

Then the function $g(x)=\sum_{n=1}^{\infty}\psi_n(x)$
is differentiable, moreover, $$g'(x)=\sum\nolimits_{n=1}^{\infty}\psi'_n(x).$$
\end{lem}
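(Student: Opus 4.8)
The statement is a termwise-differentiation result for a series of functions with disjoint supports, so the natural strategy is to verify the derivative at an arbitrary point $x_0$ directly from the difference quotient. The key structural fact is that the intervals $I_n$ are pairwise disjoint, so for a fixed $x_0$ there is \emph{at most one} index $n_0$ with $x_0\in I_n$; at every other index the function $\psi_n$ vanishes in a neighbourhood of $x_0$ only if $x_0\notin\overline{I_n}$, which need not hold, so one must be a little careful near the endpoints $a_n,b_n$. First I would split into the easy case and the delicate case.

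\textbf{Step 1 (pointwise convergence of $g$).} Since $\mathrm{supp}\,\psi_n\subset I_n$ and the $I_n$ are disjoint, for each $x$ at most one term $\psi_n(x)$ is nonzero, so the series $\sum\psi_n(x)$ converges trivially (it has at most one nonzero term) and $g$ is well defined. The same remark applies to $\sum\psi'_n(x)$: at a point $x_0$ interior to some $I_{n_0}$ all other $\psi_n$ vanish near $x_0$, hence $\psi'_n(x_0)=0$ for $n\ne n_0$; at a point $x_0$ lying in no $I_n$, every $\psi_n$ vanishes on a one-sided or two-sided neighbourhood, and one checks $\psi'_n(x_0)=0$ for all $n$ using $\mathrm{supp}\,\psi_n\subset(a_n,b_n)$ (the support being contained in the \emph{open} interval is what forces $\psi_n$ to vanish on a full neighbourhood of each endpoint).

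\textbf{Step 2 (differentiability at a point $x_0$).} Fix $x_0$ and consider $h\to 0$. Write the difference quotient
$$\frac{g(x_0+h)-g(x_0)}{h}=\sum_{n=1}^\infty\frac{\psi_n(x_0+h)-\psi_n(x_0)}{h}.$$
For $|h|$ small only finitely many — in fact, I claim essentially one or two — indices contribute. More precisely, I would show: given $\varepsilon>0$, pick $N$ with $\sup_{\mathbb R}|\psi'_n|<\varepsilon$ for all $n>N$; by disjointness, for $|h|$ small enough the segment $[x_0,x_0+h]$ meets at most one of $I_1,\dots,I_N$, so all terms with $n\le N$ except possibly one vanish. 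For the terms with $n>N$, the mean value theorem gives
$$\left|\frac{\psi_n(x_0+h)-\psi_n(x_0)}{h}\right|\le\sup_{\mathbb R}|\psi'_n|<\varepsilon,$$
and again for each fixed small $h$ at most one such term is nonzero (the segment $[x_0,x_0+h]$ meets at most one $I_n$ altogether once $|h|$ is smaller than the gap to the nearest interval not containing $x_0$, or $x_0\in I_{n_0}$ and $h$ is small enough to stay inside). Hence the whole sum differs from its dominant term by at most $\varepsilon$, and passing $h\to0$ in that dominant term yields $g'(x_0)=\sum_n\psi'_n(x_0)$ (a finite sum, equal to $\psi'_{n_0}(x_0)$ or $0$).

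\textbf{Main obstacle.} The only real subtlety is the behaviour near the endpoints $a_n$, $b_n$ and the possibility that $x_0$ is an accumulation point of the intervals $I_n$ (so infinitely many $I_n$ cluster arbitrarily close to $x_0$). The hypothesis $\sup_{\mathbb R}\psi'_n(x)\to0$ is exactly what tames this: even if the segment $[x_0,x_0+h]$ should meet several distant tiny intervals, their combined contribution is controlled because each term is bounded by $\sup|\psi'_n|<\varepsilon$ and — crucially — by disjointness, summing the variations over those intervals telescopes against the length $|h|$. I would make this precise by noting $\sum_{n>N}|\psi_n(x_0+h)-\psi_n(x_0)|\le\sum_{n>N}\big(\sup_{\mathbb R}|\psi'_n|\big)\cdot\mathrm{length}(I_n\cap[x_0,x_0+h])\le\varepsilon\,|h|$, since the intervals $I_n$ are disjoint. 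Dividing by $|h|$ keeps this under $\varepsilon$, uniformly in small $h$, which closes the argument cleanly without any assumption about how the $I_n$ are arranged.
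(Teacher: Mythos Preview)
Your direct difference-quotient argument is correct, and the estimate in the ``Main obstacle'' paragraph is precisely the right tool: since $\psi_n$ vanishes outside $I_n$, one has $|\psi_n(x_0+h)-\psi_n(x_0)|\le(\sup_{\mathbb R}|\psi'_n|)\cdot\mathrm{length}\bigl(I_n\cap[x_0,x_0+h]\bigr)$, and summing over $n>N$ against the disjoint intervals gives the bound $\varepsilon|h|$. Combined with the finitely many terms $n\le N$ (each of whose difference quotients tends to $\psi'_n(x_0)$), this yields $g'(x_0)=\sum_n\psi'_n(x_0)$ even when $x_0$ is an accumulation point of the $I_n$. (One small slip in Step~1: with the paper's convention $\mathrm{supp}\,f=\{x:f(x)\ne0\}$, the inclusion $\mathrm{supp}\,\psi_n\subset(a_n,b_n)$ does \emph{not} force $\psi_n$ to vanish on a full neighbourhood of $a_n$; but $\psi'_n(a_n)=0$ still holds, simply because $\psi_n\equiv0$ on $(-\infty,a_n]$ and $\psi_n$ is differentiable.)

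The paper takes a much shorter route: it just says the lemma ``easily follows from the theorem on series differentiability.'' The implicit observation is that $\sum_n\psi'_n$ converges \emph{uniformly} on $\mathbb R$: by disjointness of supports the tail $\sum_{n>N}\psi'_n(x)$ has at most one nonzero term at each $x$, hence is bounded by $\sup_{n>N}\sup_{\mathbb R}|\psi'_n|\to0$. Together with the trivial pointwise convergence of $\sum_n\psi_n$, the classical termwise-differentiation theorem gives the conclusion at once. Your approach unpacks that theorem by hand and trades the uniform-convergence observation for the sharper local length estimate; it is self-contained and slightly more informative, while the paper's one-line appeal is quicker once the uniform convergence is noticed.
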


The lemma easily follows from the theorem on series differentiability.

The next theorem gives a negative answer to Mazur's problem.

\begin{thm} \label{th:5.2}
There exists a separately twice differentiable function
$f:[0,1]^2\to\mathbb R$ and a measurable subset $E\subset [0,1]^2\,$, $\,\mu(E)>0$,
such that $f'_x$ is discontinuous with respect to $y$ at all points of $E$, in
particular $\;f''_{xy}$ does not exist on $E$.
\end{thm}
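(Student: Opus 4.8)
The plan is to construct $f$ as a doubly-indexed sum of ``bump-like'' building blocks arranged over a fat Cantor-type set in the $x$-variable, so that along a positive-measure set of points the $x$-derivative exhibits a fixed-size jump as $y$ moves off a reference level. First I would fix a closed set $P_0\subset[0,1]$ of positive measure with empty interior (a fat Cantor set), write its complement in $(0,1)$ as a disjoint union of open intervals $I_n=(a_n,b_n)$, and on each $I_n$ build a function that is twice differentiable in $x$, twice differentiable in $y$, and whose $x$-derivative at the endpoints behaves like a telescoping family of bumps in $y$. The idea is that the ``bad'' set $E$ will be (a positive-measure subset of) $P_0\times\{y_0\}$ for a suitable $y_0$, or more robustly $P_0\times B$ for a positive-measure set $B$ of $y$-values; at such points $f'_x(x,\cdot)$ will fail to be continuous because arbitrarily small perturbations of $y$ hit bumps from intervals $I_n$ accumulating at $x$.

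The key steps, in order, are as follows. (1) Pick a single smooth ``unit'' bump $\varphi:\mathbb{R}\to\mathbb{R}$ supported in $(0,1)$ with $\varphi\ge 0$, $\varphi\not\equiv 0$, and all derivatives bounded; similarly a smooth cutoff $\chi$ in $x$. (2) On the $n$-th gap interval $I_n$ of $P_0$, define a term $f_n(x,y)=\lambda_n\,\chi\!\big(\tfrac{x-a_n}{b_n-a_n}\big)\,\Phi_n(y)$ where $\Phi_n$ is an antiderivative-type function in $y$ chosen so that $\partial_x f_n$ has a ``spike'' of height roughly $1$ near each endpoint of $I_n$ precisely when $y$ lies in a tiny interval $J_n$ around $y_0$, with the $J_n$ shrinking but chosen so that a positive-measure set of $y$ meets infinitely many $J_n$ --- here one arranges the $J_n$ by a standard ``interval-removing'' scheme so that $\limsup_n J_n$ (as subsets of $[0,1]$) has positive measure. (3) Set $f=\sum_n f_n$. (4) Verify separate twice differentiability: in $x$, for fixed $y$, use Lemma~\ref{lem:5.1} (and its second-derivative analogue, applied to $\partial_x f$) after checking that $\sup_{\mathbb R}|\partial_x^{(k)} f_n(\cdot,y)|\to 0$ for $k=1,2$ uniformly, which forces $\lambda_n\to 0$ fast relative to the lengths $b_n-a_n$; in $y$, for fixed $x$, at most one term is nonzero (since $x$ lies in at most one $I_n$, or in $P_0$ where all terms vanish near $x$), so twice differentiability in $y$ is inherited from the single smooth factor $\Phi_n$. (5) Show $f'_x$ is discontinuous in $y$ on $E=P_0\cap(\limsup_n \overline{I_n})$-type set intersected with the positive-measure $y$-set: fix $(x_0,y_0)$ with $x_0\in P_0$ a two-sided limit of endpoints $a_n$; then $f'_x(x_0,y_0)=0$ by construction, but there are $y$ arbitrarily close to $y_0$ (namely $y\in J_n$ for $n$ large with $a_n\to x_0$) and points $x$ near $x_0$ with $|f'_x(x,y)|\approx 1$; combined with lower semicontinuity bookkeeping this yields a genuine jump, hence $f''_{xy}(x_0,y_0)$ cannot exist.

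The main obstacle will be Step (4)'s separate twice differentiability in $x$ simultaneously with Step (5)'s non-vanishing spikes: making $\partial_x f_n$ have spikes of \emph{fixed} height $\approx 1$ while also forcing $\sup|\partial_x f_n|\to 0$ globally looks contradictory, so the resolution must be that the spikes of height $\approx 1$ occur only for $y\in J_n$, and one never fixes $y$ and lets $n\to\infty$ with $y\in J_n$ simultaneously --- for each \emph{fixed} $y$, only finitely many (indeed, essentially one relevant) $n$ have $y\in J_n$ if the $J_n$ are chosen with the $\limsup$ structure correctly, so the tail sum defining $f(\cdot,y)$ genuinely has $\sup|\partial_x f_n(\cdot,y)|\to 0$. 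Getting this bookkeeping exactly right --- reconciling ``positive measure of $y$ hitting infinitely many $J_n$'' with ``each individual $y$ behaves well for the differentiability in $x$'' --- is the delicate heart of the construction, and also where one must be careful that at $x_0\in P_0$ the function $f(\cdot,y)$ is actually differentiable with derivative $0$ (not merely that the formal series of derivatives converges). I would also need to double-check the second derivative in $x$ at points of $P_0$, which requires the bumps $\chi$ to flatten appropriately at the gap endpoints so that no second-order singularity builds up across the Cantor set.
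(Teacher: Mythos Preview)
Your plan has a structural flaw that kills Step~(5). Because each $f_n(\cdot,y)$ is supported in the gap $I_n\subset[0,1]\setminus P_0$ and must be flat to second order at $a_n,b_n$ (otherwise $f$ fails to be twice differentiable in $x$ across $\partial I_n$), one gets $f'_x(x_0,y)=0$ for every $x_0\in P_0$ and every $y$. Indeed $f(x_0,y)=0$, and since $P_0$ has no isolated points there are $x_k\in P_0\setminus\{x_0\}$ with $x_k\to x_0$, so the difference quotients along that sequence vanish; if the full derivative $f'_x(x_0,y)$ exists (as separate twice differentiability demands) it must be $0$. Hence the slice $y\mapsto f'_x(x_0,y)$ is identically zero, and in particular continuous, on all of $P_0\times[0,1]$. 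What you actually exhibit in Step~(5) is \emph{joint} discontinuity of $(x,y)\mapsto f'_x(x,y)$ at $(x_0,y_0)$: you produce nearby $(x,y)$, not $(x_0,y)$, with $|f'_x|\approx 1$. That neither gives discontinuity of $y\mapsto f'_x(x_0,y)$ nor prevents $f''_{xy}(x_0,y_0)$ from existing, and no ``lower semicontinuity bookkeeping'' bridges this. The tension you flag at the end---a positive-measure set of $y$ must lie in infinitely many $J_n$, yet each fixed $y$ should lie in only finitely many for the $x$-differentiability argument---is a genuine contradiction in your scheme, not a delicate detail to be tuned.

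The paper avoids all this by placing the Cantor-type set $B$ in the $y$-variable and, crucially, making the $x$-factors \emph{global rather than localized}: one writes $f(x,y)=\sum_n\varphi_n(x)\psi_n(y)$, where each $\psi_n$ is a bump supported on a gap $I_n$ of $B$, while each $\varphi_n:[0,1]\to[0,\varepsilon_n]$ is twice differentiable with $\varepsilon_n/|I_n|^2\to 0$ but with $|\varphi'_n|\ge 1$ on a set $A_n$ of measure $>1-\delta_n$, $\sum_n\delta_n<\infty$. For $y_0\in B$ one has $f'_x(x_0,y_0)=0$, yet moving $y$ into a nearby gap $I_k$ gives $f'_x(x_0,y)=\varphi'_k(x_0)\psi_k(y)$, bounded away from $0$ whenever $x_0\in A_k$; the bad set is $E=\bigl(\bigcup_m\bigcap_{n\ge m}A_n\bigr)\times B$. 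The decoupling of ``small amplitude'' from ``large derivative'' in the global $\varphi_n$ is precisely what localized bumps in $x$ cannot deliver.
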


\begin{proof}
Let $B\subset [0,1]$ be a closed set of positive measure without isolated points
whose complement $[0,1]\setminus B$ is dense in $[0,1]$. Take intervals $I_n=(a_n,b_n)$ such
that $[0,1]\setminus B=\bigsqcup\limits_{n=1}^{\infty}I_n$. Let $\psi:\mathbb R\to\mathbb{R}^+$
be an arbitrary twice continuously differentiable function with
$\mathrm{supp}\,\psi(y)=(0,1)$ and $$\psi_n(y):=\psi\left(\frac{y-a_n}{b_n-a_n}\right)\,,
\;\;n=1,2,\dots$$

Take $\varepsilon_n, \delta_n>0$ so that
\begin{equation}\label{eq:5.1}
\lim\limits_{n\to\infty}\frac{\varepsilon_n}{(b_n-a_n)^2}=0\;\;\;\mathrm{and}
\end{equation}
\begin{equation}\label{eq:5.2}
\sum\nolimits_{n=1}^\infty\delta_n<\infty.
\end{equation}

Choose twice differentiable functions
$\varphi_n:[0,1]\to [0,\varepsilon_n]$ such that
\begin{equation}\label{eq:5.3}
\mu(A_n)>1-\delta_n,\;\;n=1,2,\dots,
\end{equation}
where $A_n=\{x\in[0,1]:|\varphi'_n(x)|\ge 1\}$.

Let us consider the function $f:[0,1]^2\to\mathbb{R}$,
$$f(x,y)=\sum\nolimits_{n=1}^{\infty}\varphi_n(x)\psi_n(y).$$

It easy to see that $f''_{xx}$ exists. Moreover, by (\ref{eq:5.1}) and Lemma
\ref{lem:5.1}, $f''_{yy}$ exists. Put
$$A=\bigcup_{m=1}^\infty \bigcap_{n\geq m}A_n.$$
Then, by (\ref{eq:5.3}) and (\ref{eq:5.2}), $\mu(A)=1$. We show that $f'_x$
is discontinuous with respect to $y$ at every point of the set
$E=A\times B$. Fix $(x_0,y_0)\in E$ and $\delta>0$. Choose $m$ so
that $x_0\in \bigcap_{n\geq m}A_n$. Since $B$ has no
isolated points, there exists $k>m$ such that
$|y-y_0|<\delta$ for all $y\in I_k$. Take $y_k\in I_k$ so that
$\psi_k(y_k)=\max\limits_\mathbb{R}\psi(y)$. Now we have $|y_k-y_0|<\delta\,$,
$\,x_0\in A_k$ and
$$\left|f'_x(x_0,y_k)-f'_x(x_0,y_0)\right|=\left|\varphi'_k(x_0)\right|\psi_k(y_k)\geq
\max\limits_\mathbb{R}\psi(y).$$
\end{proof}

Note that in the constructed example the partial derivative $f''_{yy}$ is bounded
and $f''_{xx}$ is not absolutely integrable.

\subsection*{Acknowledgements}
This research was partly supported by NSF (grant no. XXXX).

\end{document}